\newtheorem{teo}{Theorem}
\newtheorem{lem}{Lemma}
\newtheorem{deff}{Definition}
 \title{{\bf  Theorem of Existence and Uniqueness of Solution  for Differential Equation of Fractional Order }}
\author{M.\,V.~Kukushkin \textsuperscript{1} \\ \\
  \small \textsuperscript{1} \textit{Russia, Geleznovodsk, kukushkinmv@rambler.ru} \\}
\date{}
\begin{document}

\maketitle

\begin{abstract}
In this paper we proved a theorems of existence and uniqueness of solutions of differential equation of second order with fractional derivative in the Kipriyanov sense  in lower terms. As a domain of definition of the functions we consider the n --- dimensional   Euclidean space. By a simple reduction of Kipriyanov operator  to the operator of fractional differentiation in the sense of Marchaud these results can be considered valid for the operator of fractional differentiation in the sense of Riemann-Liouville, because of  known fact coincidence of these operators on the  classes of functions representable by the fractional integral.
\end{abstract}
\begin{small}\textbf{Keywords:} Fractional derivative;   embedding theorems;  energetic space;
  energetic inequality; fractional integral;
strong accretive operator;  positive defined operator.\\\\
{\textbf{MSC} 35D30; 35D35; 47F05; 47F99. }
\end{small}

\section{Brief historical review}
  In 1960, the famous mathematician   Kipriyanov I.A. in his paper \cite{firstab_lit:1kipriyanov1960} focuses  to the properties of the
eponymous  operator was formulated the theorem of existence and uniqueness of solutions for partial differential equations  second order with  operator fractional differentiation  in the lower terms,
it is noteworthy that the proof of this theorem was not published.
  Mathematicians  Djrbashian M.M., Nakhushev A.M.  ones  of the first in their works researched the differential equation second order  with fractional derivatives in the lower terms.
In 1970 was published the    work of  Djrbashian M.M. \cite{firstab_lit:1Jrbashyan1970}, in which is probably the first time considered  the problem of    eigenvalues of the differential operator fractional order.
In 1977 was published the    work of Nakhushev A.M. \cite{firstab_lit:1Nakhushev1977}. The author was considering the differential operator   second order with fractional derivatives in the sense of Riemann-Liouville in   lower terms. In this work was proved subsequently acquiring  a great value  theorem establishes a relationship between the eigenvalues of    homogeneous differential equation of   second order  with fractional derivative in lower terms and the zeros of functions  Mittag-Leffler type. Research in this direction was continued by Aleroev T.S.,  in 1982  was published his work \cite{firstab_lit:1Aleroev1982} in which he establishes a relationship between the zeros of an entire function and eigenvalues of the  boundary value problems for differential equations second order with fractional derivatives in the lower terms.
  It should be noted the monograph  of  Pskhu A.V. \cite{firstab_lit:Pskhu2005}  focuses to the partial differential equations of fractional order,  which was published in 2005. Bangti Jean and William Randall in their paper \cite{firstab_lit:1Bangti Jin2012} 2012   considered the inverse problem to the Sturm-Liouville problem for differential operator   second order with fractional derivative in the lower terms.
  It remains to note that the theory of differential equations of fractional order is still relevant today.

\section{Introduction}
 Accepting  a  notation   \cite{firstab_lit:kipriyanov1960} we assume that $\Omega$ --- convex domain of $n$ --- dimensional Euclidean space $\mathbb{E}^{n}$, $P$ is a fixed point of the boundary $\partial\Omega,$
$Q(r,\vec{\mathbf{e}})$ is an arbitrary point of $\Omega;$ we denote by $\vec{\mathbf{e}}$ is a unit vector having the direction from $P$ to $Q,$ using $r$ is the Euclidean distance between points $P$ and $Q.$
We will consider classes of Lebesgue $L_{p}(\Omega),\;1\leq p<\infty $  complex valued functions. In polar coordinates summability $f$ on $\Omega$ of degree $p,$ means that
\begin{equation*}
\int\limits_{\Omega}|f(Q)|^{p}dQ=\int\limits_{\omega}d\chi\int\limits_{0}^{d(\vec{\mathbf{e}})}|f(Q)|^{p}r^{n-1}dr<\infty,
\end{equation*}
where $d\chi$ --- is the element of the solid angle
the surface of a unit sphere in $\mathbb{E}^{n}$ and $\omega$ ---   surface of this sphere,   $d:=d(\vec{\mathbf{e}})$ --- is the length of  segment of  ray going from point $P$ in the direction
$\vec{\mathbf{e}}$ within the domain $\Omega.$
 Notation  ${\rm Lip}\, \lambda,\;0<\lambda\leq1 $   means the set of functions satisfying the Holder-Lipschitz condition
$$
{\rm Lip}\, \lambda:=\left\{\rho(Q):\;|\rho(Q)-\rho(P)|\leq M r^{\lambda},\;P,Q\in \bar{\Omega}\right\}.
$$
The operator of fractional differentiation in the  sense of Kipriyanov   defined in \cite{firstab_lit:1kipriyanov1960}  by formal expression
\begin{equation*}
\mathfrak{D}^{\alpha}(Q)=\frac{\alpha}{\Gamma(1-\alpha)}\int\limits_{0}^{r} \frac{[f(Q)-f(P+\vec{\mathbf{e}}t)]}{(r - t)^{\alpha+1}} \left(\frac{t}{r} \right) ^{n-1} dt+
C^{(\alpha)}_{n} f(Q) r ^{ -\alpha},\; P\in\partial\Omega,
$$
$$
C^{(\alpha)}_{n} = (n-1)!/\Gamma(n-\alpha),
\end{equation*}
according to theorem 2   \cite{firstab_lit:1kipriyanov1960} acting as follows
\begin{equation}\label{1}
\mathfrak{D}^{\alpha}:\stackrel{0}{W_p ^l} (\Omega)\rightarrow L_{q}(\Omega),
 \;lp\leq n,\;0<\alpha<l- \frac{n}{p} +\frac{n}{q},\; p\leq q<\frac{np}{n-lp}.
\end{equation}
If in the condition  \eqref{1}  we have  the strict inequality $q>p,$  then    for sufficiently small $\delta>0$   the next inequality holds
\begin{equation}\label{2}
\|\mathfrak{D}^{\alpha}f\|_{L_{q}(\Omega)}\leq \frac{K}{\delta^{\nu}}\|f\|_{L_{p}(\Omega)}+\delta^{1-\nu}\|f\|_{L^{l}_{p}(\Omega)},
\end{equation}
where
\begin{equation*}
 \nu=\frac{n}{l}\left(\frac{1}{p}-\frac{1}{q} \right)+\frac{\alpha+\beta}{l}.
\end{equation*}
The constant  $K$ independents on $\delta,\;f$ and point $P\in\partial\Omega ;\;\beta$ --- an arbitrarily small fixed positive number.
Further we assume that $(0<\alpha<1).$
  Denote ${\rm diam}\,\Omega =\mathfrak{d};\;C,C_{i}={\rm const},\;i\in \mathbb{N}_{0}.$ We use for inner product of points $P=(P_{1},P_{2},...,P_{n}) $ and $Q=(Q_{1},Q_{2},...,Q_{n})$ which    belong to  $\mathbb{E}^{n}$ a contracted notations $P\cdot Q=P^{i}\overline{Q_{i}}=\sum^{n}_{i=1}P_{i}\overline{Q_{i}},$ denote $|P-Q|=r$ ---- Euclidean distance  between $P$ and $Q.$ As usually  denote  $D_{i}u$ --- the generalized derivative of function $u$ with respect to coordinate variable with index   $1\leq i\leq n$ and let $Du=(D_{1}u,D_{2}u,...,D_{n}u).$ Denote $\vec{ \mathrm{e }}_{k},\,1\leq k \leq n$ --- ort on $n$ --- dimensional Euclidean space, and define the difference attitude $ \triangle^{h}_{k}v=
 [v(Q+  \vec{\mathrm{e}}_{k}h)-v(Q)]/h .$
We will assume that all functions has a zero extension outside  of $\bar{\Omega}.$
Everywhere further,  if not stated otherwise we will use the notations of   \cite{firstab_lit:kipriyanov1960}, \cite{firstab_lit:1kipriyanov1960}.

We define the familie    of operators $ \psi^{-}_{\varepsilon },\;\varepsilon>0$ as follows: $  \mathrm{D} (\psi^{-}_{  \varepsilon })\subset L_{p}(\Omega).$
In the right-side case
\begin{equation*}
 (\psi^{-}_{  \varepsilon }f)(Q)=  \left\{ \begin{aligned}
 \int\limits_{r+\varepsilon }^{d }\frac{ f (P+\vec{\mathbf{e}}r)- f(P+\vec{\mathbf{e}}t)}{( t-r)^{\alpha +1}} dt,\;0\leq r\leq d -\varepsilon,\\
   \frac{ f(Q)}{\alpha} \left(\frac{1}{\varepsilon^{\alpha}}-\frac{1}{(d -r)^{\alpha} }    \right),\;\;\;d -\varepsilon <r \leq d .\\
\end{aligned}
 \right.
 \end{equation*}
Following \cite[p.181]{firstab_lit:samko1987}  we define a truncated fractional derivative similarly the derivative in the sense of Marchaud, in the right-side case
 \begin{equation*}
 ( \mathfrak{D }^{\alpha}_{d-,\varepsilon}f)(Q)=\frac{1}{\Gamma(1-\alpha)}f(Q)(d-r)^{-\alpha}+\frac{\alpha}{\Gamma(1-\alpha)}(\psi^{-}_{  \varepsilon }f)(Q).
 \end{equation*}
  Right-side fractional derivatives  accordingly  will be understood  as a limit  in the sense of norm  $L_{p}(\Omega),\,1\leq p<\infty$ of truncated fractional derivative
 \begin{equation*}
 \mathfrak{D }^{\alpha}_{d-}f=\lim\limits_{\stackrel{\varepsilon\rightarrow 0}{ (L_{p}) }} \mathfrak{D }^{\alpha}_{d-,\varepsilon} f .
\end{equation*}

Consider a uniformly elliptic operator with real-valued   coefficients and fractional derivative in the sense of Kipriyanov  in the lower terms,    defined by   expression
\begin{equation}\label{3}
 Lu:=-  D_{j} ( a^{ij} D_{i}u)  +p\, \mathfrak{D}^{ \alpha }u,\;\;  i,j=1,2,...,n\, ,
 \end{equation}
 \begin{equation}\label{4}
 \; \mathfrak{D}(L)=H^{2}(\Omega)\cap H^{1}_{0}(\Omega),
 \end{equation}
 \begin{equation}\label{5}
 a^{ij}(Q)\in C^{1}(\bar{\Omega})  ,\;a^{ij}\xi _{i}  \xi _{j}  \geq a_{0}  |\xi|^{2},\,a_{0}>0,\;p(Q)>0,\;p(Q)\in {\rm Lip\,\lambda},\, \lambda>\alpha.
\end{equation}
We will use  a special case of the Green's formula
\begin{equation}\label{6.1}
-\int\limits_{\Omega}v\,\overline{D_{j}(a^{ij}D_{i}u)}\, dQ=\int\limits_{\Omega}a^{ij}D_{j}v\, \overline{D_{i}u}\,  dQ\,,\;u\in H^{2}(\Omega),v\in H_{0}^{1}(\Omega) .
\end{equation}
In later we  will need a following lemma.
\begin{lem}\label{L1}
Let $u,v\in L_{2}(\Omega),\;  {\rm dist}\,({\rm supp}\,v,\,\partial\Omega)>2|h|,$ then we have a following formula
 \begin{equation}\label{16}
\int\limits_{\Omega}\triangle^{ h}_{k}v\,\overline{u}\,dQ =
-\int\limits_{\Omega}v\, \triangle^{ -h}_{k}\overline{u} \,dQ.
\end{equation}
\end{lem}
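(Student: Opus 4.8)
The plan is to unwind the definition of the difference quotient $\triangle^{h}_{k}v = [v(Q+\vec{\mathrm{e}}_k h) - v(Q)]/h$ and split the integral into two pieces, then perform a change of variables in the term involving the shifted argument. Concretely, I would write
\begin{equation*}
\int\limits_{\Omega}\triangle^{h}_{k}v\,\overline{u}\,dQ = \frac{1}{h}\int\limits_{\Omega} v(Q+\vec{\mathrm{e}}_k h)\,\overline{u(Q)}\,dQ - \frac{1}{h}\int\limits_{\Omega} v(Q)\,\overline{u(Q)}\,dQ,
\end{equation*}
and in the first integral substitute $Q \mapsto Q - \vec{\mathrm{e}}_k h$. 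Since all functions are extended by zero outside $\bar\Omega$, the integrals may be regarded as taken over all of $\mathbb{E}^n$ and the translation is measure-preserving with no boundary contribution, so the first term becomes $\frac{1}{h}\int v(Q)\,\overline{u(Q-\vec{\mathrm{e}}_k h)}\,dQ$. Combining, $\int_\Omega \triangle^h_k v\,\overline u\,dQ = \int v(Q)\,\frac{\overline{u(Q-\vec{\mathrm{e}}_k h)} - \overline{u(Q)}}{h}\,dQ = -\int v(Q)\,\overline{\triangle^{-h}_k u(Q)}\,dQ$, which is exactly \eqref{16}.

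The role of the hypothesis ${\rm dist}({\rm supp}\,v,\partial\Omega) > 2|h|$ is to guarantee that every argument appearing — namely $Q+\vec{\mathrm{e}}_k h$ in the original expression and $Q-\vec{\mathrm{e}}_k h$ in the transformed one, evaluated where $v$ is nonzero — stays inside $\Omega$, so that the zero-extension convention is consistent and the manipulation does not secretly rely on values of $u$ outside the domain; I would state this explicitly. I would also note that the integrability of each separate piece is justified by $u,v \in L_2(\Omega)$ together with Cauchy--Schwarz (or simply that the shift operator is an isometry of $L_2$), so that splitting the integral is legitimate.

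The argument is essentially a routine change of variables, so there is no serious obstacle; the only point requiring a little care is bookkeeping with the zero extension and the support condition, ensuring that the substitution $Q\mapsto Q\mp\vec{\mathrm{e}}_k h$ produces no boundary terms and that one never evaluates $u$ or $v$ at points where the zero-extension would spuriously interact with $\partial\Omega$. I would therefore present the proof in three short steps: (1) split the integral using integrability of the pieces; (2) translate the variable in the shifted term, invoking the support hypothesis to keep everything inside $\Omega$; (3) recombine and recognize $\triangle^{-h}_k\overline u$, obtaining \eqref{16}.
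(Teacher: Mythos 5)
Your proposal is correct and follows essentially the same route as the paper: split the integral via the definition of $\triangle^{h}_{k}v$, translate the variable in the shifted term, and use the support condition ${\rm dist}\,({\rm supp}\,v,\partial\Omega)>2|h|$ (together with the zero extension) to identify the shifted and unshifted domains of integration before recombining into $-\int_{\Omega}v\,\triangle^{-h}_{k}\overline{u}\,dQ$. The paper merely phrases the translation through polar coordinates and the shifted domain $\Omega'$, which is the same change of variables you perform directly.
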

\begin{proof}
In assumptions of this lemma we have a following
$$
\int\limits_{\Omega}\triangle^{ h}_{k}v\,\overline{u}\,dQ=\frac{1}{h}\int\limits_{\Omega} \left[v(Q+e_{k}h)-v(Q)\right]\,\overline{u(Q)}\,dQ =
 $$
$$
= \frac{1}{h}\int\limits_{\omega}d\chi \int\limits_{0}^{r}  v(P'+\mathbf{\bar{e}}r ) \,\overline{u(P'+\mathbf{\bar{e}}r-e_{k}h)}\,r^{n-1}dr-\frac{1}{h}\int\limits_{\Omega}  v(Q) \,\overline{u(Q)}\,dQ=
$$
$$
=\frac{1}{h}\int\limits_{\Omega'}   v(Q' ) \,\overline{u(Q'-e_{k}h)}\,dQ'-\frac{1}{h}\int\limits_{\Omega}  v(Q) \,\overline{u(Q)}\,dQ,\;P'=P+e_{k}h,\;Q'=P'+\mathbf{\bar{e}}r,
$$
where $\Omega'$ shift of the domain $\Omega$ on the distance $h$ in the direction $e_{k}.$
Note that   in consequence of   condition on the set: ${\rm supp}\,u,$ we have: ${\rm supp}\,u_{1}\subset \Omega\cap \Omega',\;u_{1}(Q')=u(Q'-e_{k}h) .$ Hence, finely        we can rewrite the last relation as a following
 \begin{equation*}
\int\limits_{\Omega}\triangle^{ h}_{k}v\,\overline{u}\,dQ=\frac{1}{h}\int\limits_{\Omega}   v(Q ) \overline{\left[ u(Q-e_{k}h)  -u(Q) \right]}\,dQ=
-\int\limits_{\Omega}v\,\triangle^{ -h}_{k}\overline{u}\,dQ.
\end{equation*}
\end{proof}

The theorems of existence and uniqueness which will be proved in the next section  based on the results obtained in the papers \cite{firstab_lit:1kukushkin2017}, \cite{firstab_lit:2kukushkin2017}.
\section{  Main theorems }
Consider the boundary value problem \eqref{3},\eqref{4}.
The proved a strong accretive property for  operators of fractional dierentiation provides the opportunity by using Lax-Milgram theorem    to prove the theorem of existence and uniqueness of generalized solution for this problem.
\begin{deff}\label{D1}
We will call the element $z\in H^{1}_{0}(\Omega) $ as a generalized solution of the boundary value problem \eqref{3},\eqref{4} if the following integral identity holds
\begin{equation}\label{6}
 B(v,z)= (v,f)_{L_{2}(\Omega)}  ,\;\forall v\in H^{1}_{0}(\Omega),
 \end{equation}
 where
\begin{equation*}
 B (v,u)= \int\limits_{\Omega} \left[ a^{ij}D_{j}v \overline{D_{i}u}  +      (\mathfrak{D}^{\alpha}_{d-}p\,v)  \, \overline{u} \right]\,dQ ,\;u,v\in H^{1}_{0}(\Omega).
\end{equation*}
\end{deff}

\begin{teo}\label{T1}

 There is an unique   generalized solution  of the   boundary value problem   \eqref{3},\eqref{4}.
\end{teo}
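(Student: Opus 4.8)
The plan is to cast the identity \eqref{6} as the hypothesis of the Lax--Milgram theorem on the Hilbert space $H^{1}_{0}(\Omega)$, taken with the inner product $(v,u)=\int_{\Omega}D_{j}v\,\overline{D_{j}u}\,dQ$, which by the Friedrichs inequality is equivalent to the full $H^{1}$-norm on $H^{1}_{0}(\Omega)$. Three facts then have to be established: that $v\mapsto(v,f)_{L_{2}(\Omega)}$ is a bounded linear functional on $H^{1}_{0}(\Omega)$, which is immediate from the Cauchy--Schwarz and Friedrichs inequalities; that the form $B(\cdot,\cdot)$ is bounded on $H^{1}_{0}(\Omega)\times H^{1}_{0}(\Omega)$; and that $B$ is coercive, $\mathrm{Re}\,B(v,v)\geq c\|v\|_{H^{1}_{0}(\Omega)}^{2}$. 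Granting these, Lax--Milgram produces a unique $z\in H^{1}_{0}(\Omega)$ satisfying \eqref{6} for every $v\in H^{1}_{0}(\Omega)$, which is precisely the assertion of Theorem \ref{T1} in view of Definition \ref{D1}.

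Boundedness of $B$ decomposes along its two terms. The principal term is controlled by $\bigl|\int_{\Omega}a^{ij}D_{j}v\,\overline{D_{i}u}\,dQ\bigr|\leq C\|Dv\|_{L_{2}(\Omega)}\|Du\|_{L_{2}(\Omega)}$ since the coefficients $a^{ij}$ are bounded on $\bar{\Omega}$ by \eqref{5}. For the lower-order term I would first note, using $p\in{\rm Lip}\,\lambda$ and $p(Q)>0$ on the compact $\bar{\Omega}$, that multiplication by $p$ keeps $v$ in the Sobolev class for which the fractional derivative is controlled, and then apply the mapping property \eqref{1} with $l=1,\ q=2$ together, where the strict inequality is available, with the interpolation estimate \eqref{2} (fixing $\delta$), to get $\|\mathfrak{D}^{\alpha}_{d-}(p\,v)\|_{L_{2}(\Omega)}\leq C\|v\|_{H^{1}_{0}(\Omega)}$; the restriction $\lambda>\alpha$ is exactly what is needed here to keep $p\,v$ in the admissible class and to estimate the difference between $\mathfrak{D}^{\alpha}_{d-}(p\,v)$ and $p\,\mathfrak{D}^{\alpha}_{d-}v$. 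A further Cauchy--Schwarz step then bounds $\bigl|\int_{\Omega}(\mathfrak{D}^{\alpha}_{d-}p\,v)\,\overline{u}\,dQ\bigr|$ by $C\|v\|_{H^{1}_{0}(\Omega)}\|u\|_{L_{2}(\Omega)}$, so altogether $|B(v,u)|\leq C\|v\|_{H^{1}_{0}(\Omega)}\|u\|_{H^{1}_{0}(\Omega)}$.

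Coercivity is the substantive point. The principal term is handled by uniform ellipticity \eqref{5}, which gives $\mathrm{Re}\int_{\Omega}a^{ij}D_{j}v\,\overline{D_{i}v}\,dQ\geq a_{0}\|Dv\|_{L_{2}(\Omega)}^{2}=a_{0}\|v\|_{H^{1}_{0}(\Omega)}^{2}$. It then remains to show that the lower-order contribution $\mathrm{Re}\int_{\Omega}(\mathfrak{D}^{\alpha}_{d-}p\,v)\,\overline{v}\,dQ$ is nonnegative, or at least bounded below by a quantity dominated by $a_{0}\|v\|_{H^{1}_{0}(\Omega)}^{2}$. This is where the strong accretivity of the right-sided Marchaud-type fractional differentiation operator proved in \cite{firstab_lit:1kukushkin2017}, \cite{firstab_lit:2kukushkin2017} enters: it yields $\mathrm{Re}(\mathfrak{D}^{\alpha}_{d-}g,g)_{L_{2}(\Omega)}\geq0$ for $g$ in the relevant domain, and since $p>0$ and, by $\lambda>\alpha$, one may replace $\mathfrak{D}^{\alpha}_{d-}(p\,v)$ by $p\,\mathfrak{D}^{\alpha}_{d-}v$ up to a term estimated via \eqref{2}, the sign is preserved. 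Hence $\mathrm{Re}\,B(v,v)\geq a_{0}\|v\|_{H^{1}_{0}(\Omega)}^{2}$, possibly after absorbing a lower-order remainder.

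The main obstacle I anticipate is exactly this accretivity estimate for the variable-coefficient lower-order term: one must control the commutator between the right-sided derivative $\mathfrak{D}^{\alpha}_{d-}$ and multiplication by the H\"older weight $p$, and carry out the fractional integration-by-parts that turns the Kipriyanov derivative in \eqref{3} into the right-sided derivative appearing in $B$, i.e.\ the identity $\int_{\Omega}(p\,v)\overline{\mathfrak{D}^{\alpha}u}\,dQ=\int_{\Omega}(\mathfrak{D}^{\alpha}_{d-}p\,v)\,\overline{u}\,dQ$ underlying Definition \ref{D1}. With the reduction of Kipriyanov to Marchaud from the cited works and the embedding inequalities \eqref{1}--\eqref{2} in hand, the remaining application of Lax--Milgram is routine.
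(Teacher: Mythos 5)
Your skeleton (Lax--Milgram, boundedness and coercivity of $B$) coincides with the paper's, but both substantive estimates are left resting on steps that do not go through. The critical one is coercivity. You propose to use unweighted accretivity, $\mathrm{Re}\,(\mathfrak{D}^{\alpha}_{d-}g,g)_{L_{2}}\geq 0$, after replacing $\mathfrak{D}^{\alpha}_{d-}(p\,v)$ by $p\,\mathfrak{D}^{\alpha}_{d-}v$ ``up to a term estimated via \eqref{2}'' which is then ``absorbed''. There is no small parameter in that commutator: at best it is of order $\|v\|^{2}_{L_{2}(\Omega)}$ (or $\|v\|_{L_{2}}\|v\|_{H^{1}}$), and by the Friedrichs inequality such a quantity is comparable to, not dominated by, $a_{0}\|Dv\|^{2}_{L_{2}(\Omega)}$; moreover its sign is uncontrolled, so it cannot be absorbed into the elliptic term. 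The paper never meets this commutator: it invokes the \emph{weighted} strong accretivity inequalities (28) and (36) of \cite{firstab_lit:1kukushkin2017}, which concern the operator with the weight already inside and give directly $\mathrm{Re}\,(\mathfrak{D}^{\alpha}_{d-}p\,v,v)_{L_{2}(\Omega)}\geq\lambda^{-2}\|v\|^{2}_{L_{2}(\Omega,p)}\geq\lambda^{-2}p_{0}\|v\|^{2}_{L_{2}(\Omega)}$, whence $\mathrm{Re}\,B(v,v)\geq\min\{a_{0},\lambda^{-2}p_{0}\}\|v\|^{2}_{H^{1}_{0}}$ with no remainder at all. You flag exactly this point as ``the main obstacle'', but that obstacle is the theorem: without the imported weighted inequality (or an equivalent commutator analysis, which you do not supply and which your absorption scheme cannot replace), the proof does not close.

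The boundedness of the fractional term has a parallel defect. You want $\|\mathfrak{D}^{\alpha}_{d-}(p\,v)\|_{L_{2}(\Omega)}\leq C\|v\|_{H^{1}_{0}(\Omega)}$ from \eqref{1}--\eqref{2} with $l=1$, $p=q=2$. But \eqref{1}--\eqref{2} are stated for the Kipriyanov derivative $\mathfrak{D}^{\alpha}$, not for $\mathfrak{D}^{\alpha}_{d-}$; \eqref{2} requires the strict inequality $q>p$; and multiplication by $p\in{\rm Lip}\,\lambda$ with $\lambda<1$ does not map $H^{1}_{0}(\Omega)$ into itself, so $p\,v$ need not belong to the class $\stackrel{0}{W^{1}_{2}}(\Omega)$ to which \eqref{1} applies. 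The paper's route avoids all of this: the adjoint identity \eqref{9} (lemmas 1 and 2 of \cite{firstab_lit:1kukushkin2017}) moves the derivative onto $u$ as a Kipriyanov derivative, $(\mathfrak{D}^{\alpha}_{d-}p\,v,u)_{L_{2}(\Omega)}=(v,\mathfrak{D}^{\alpha}u)_{L_{2}(\Omega,p)}$, after which \eqref{2} is applied to $\mathfrak{D}^{\alpha}u$ with some $2<q<\frac{2n}{2\alpha-2+n}$, and H\"older, Jung and Friedrichs inequalities yield \eqref{10}. So the correct repair of your argument is to use \eqref{9} and the weighted accretivity estimate from the cited companion papers at both places where you currently rely on the commutator.
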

\begin{proof}
We will Show that the form \eqref{6} satisfies the conditions of Lax-Milgram theorem, particulary    we will show that  the next inequalities holds
  \begin{equation}\label{7}
  |B (v,u)|\leq K_{1}\|v\|_{H^{1}_{0}}\|u\|_{H^{1}_{0}} ,\;\;{\rm Re}\,B (v,v)\geq  K_{2}\|v\|^{2}_{H^{1}_{0} },\;u,v\in H^{1}_{0}(\Omega),
 \end{equation}
where  $K_{1}>0,\;K_{2}>0$ are constants independents from real functions  $u,v.$\\
Let us prove the first inequality of \eqref{7}.
 Using the Cauchy-Schwarz inequality for a sum,  we have
\begin{equation*}
a^{ij}  D_{j}v \overline{D_{i}u}\leq a(Q) |Dv||Du| ,\;a(Q)=\left(\sum\limits_{i,j=1}^{n}
|a_{ij}(Q)|^{2} \right)^{1/2}.
\end{equation*}
Hence
\begin{equation}\label{8}
 \left| \int\limits_{\Omega} a^{ij} D_{j}v \overline{D_{i}u}\, dQ\right|\leq    P   \|v\|_{H^{1}_{0}(\Omega)}\|u\|_{H^{1}_{0}(\Omega)},\;P=\sup\limits_{Q\in \Omega}|a(Q)|.
\end{equation}
In consequence of lemma 1 \cite{firstab_lit:1kukushkin2017},  lemma 2 \cite{firstab_lit:1kukushkin2017},    we have
\begin{equation}\label{9}
 (\mathfrak{D} ^{\alpha}_{d-}p\, v ,u    )_{L_{2}(\Omega )} =  ( v,\mathfrak{D}^{ \alpha }u)_{L_{2}(\Omega,p)},\;u,v\in H^{1}_{0}(\Omega)  .
\end{equation}
 Applying  the inequality \eqref{2}, then  Jung's inequality    we get
\begin{equation*}
\left|( v,\mathfrak{D}^{ \alpha }u)_{L_{2}(\Omega,p)}\right|     \leq C_{0}
  \|v\|_{L_{2}(\Omega)}\|\mathfrak{D}^{\alpha}u\|_{L_{q}(\Omega)} \leq
C_{0}  \|v\|_{L_{2}(\Omega)}\left\{\frac{K}{\delta^{\nu}}\|u\|_{L_{2}(\Omega)}+\delta^{1-\nu}\|u\|_{L^{1}_{2}(\Omega)} \right\} \leq
$$
$$
\leq\frac{1}{\varepsilon} \|v\|^{2}_{L_{2}(\Omega)} + \varepsilon\left(\frac{ KC_{0} }{\sqrt{2}\delta^{\nu}}\right)^{2} \|u\|^{2}_{L_{2}(\Omega)}  +
 \frac{\varepsilon}{2}\left( C_{0} \delta^{1-\nu}\right)^{2} \|u\|^{2}_{L^{1}_{2}(\Omega)}  ,
\end{equation*}
\begin{equation*}
 2<q<\frac{2n}{2\alpha-2+n},\;C_{0} = ({\rm mess}\,\Omega)^{\frac{q-2}{q}}\sup\limits_{Q\in\Omega}p(Q).
 \end{equation*}
Applying the Friedrichs inequality, finely we have a following estimate
\begin{equation}\label{10}
| (\mathfrak{D} ^{\alpha}_{d-}p\, v ,u    )_{L_{2}(\Omega )} |\leq C\|v\|_{H^{1}_{0}}\|u\|_{H^{1}_{0}}.
 \end{equation}
Note that  from inequalities  \eqref{8},\eqref{10} follows  the first inequality of \eqref{7}. Using the inequalities (28)\,\cite{firstab_lit:1kukushkin2017}, (36)\,\cite{firstab_lit:1kukushkin2017}, we have
\begin{equation}\label{11}
{\rm Re}\,B(v,v)\geq a_{0}\|v\|^{2}_{L_{2}^{1}(\Omega)}+\lambda^{-2}\|v\|^{2}_{L_{2}(\Omega,p)}\geq a_{0}\|v\|^{2}_{L_{2}^{1}(\Omega)}+\lambda^{-2}p_{0}\|v\|^{2}_{L_{2}(\Omega)},\;p_{0}=\inf\limits_{Q\in \Omega} p(Q).
\end{equation}
It is obviously that
\begin{equation*}
a_{0}\|v\|^{2}_{L_{2}^{1}(\Omega)}+\lambda^{-2}p_{0}\|v\|^{2}_{L_{2}(\Omega)}\geq K_{2}\left(\|v\|^{2}_{L_{2}^{1}(\Omega)}+
\|v\|^{2}_{L_{2}(\Omega)} \right)=
\end{equation*}
\begin{equation}\label{13}
=K_{2}\left(\int\limits_{\Omega}\sum\limits_{i=1}^{n}|D_{i}v|^{2}dQ+\int\limits_{\Omega} | v|^{2}dQ\right)=K_{2}\|v\|^{2}_{H_{0}^{1}},\;K_{2}=\min\{a_{0},\lambda^{-2}p_{0}\}.
\end{equation}
Hence  the second inequality of \eqref{7} follows from the  inequalities \eqref{11}, \eqref{13}.

Since conditions of Lax-Milgram theorem holds, then for all bounded on $H^{1}_{0}(\Omega) $ functional  $F,$  exist unique element $z\in H^{1}_{0}(\Omega) $ such as
 \begin{equation}\label{12.1}
   B (v,z)= F(v)  ,\;\forall v\in H^{1}_{0}(\Omega).
 \end{equation}
 Consider the functional
 \begin{equation}\label{14}
F(v)=(v,f)_{L_{2}(\Omega)},\,f\in L_{2}(\Omega),\,v\in H^{1}_{0}(\Omega).
 \end{equation}
Applying the  Cauchy-Schwarz inequality, we get
\begin{equation*}
|F(v)|= |( v,f)_{L_{2}(\Omega)}|\leq \|f\|_{L_{2}(\Omega)}\|v\| _{H_{0}^{1}(\Omega)}.
\end{equation*}
Hence the functional \eqref{14} is bounded on $H^{1}_{0}(\Omega),$ then in accordance with \eqref{12.1} we have   equality
 \begin{equation}\label{15}
   B ( v,z)= ( v,f)_{L_{2}(\Omega)}  ,\;\forall v\in H^{1}_{0}(\Omega).
 \end{equation}
 Therefore in accordance with definition  \ref{D1}  element   $z$ is an unique generalized solution of the   boundary value problem \eqref{3},\eqref{4}.
\end{proof}

The theorem \ref{T1} allows to prove the theorem of existence and uniqueness of   solution of the boundary value problem   \eqref{3},\eqref{4}.

 \begin{teo}\label{T2}
 There is  an unique strong solution  of the boundary value  problem \eqref{3},\eqref{4}.
\end{teo}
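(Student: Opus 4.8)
The plan is to promote the generalized solution $z\in H^{1}_{0}(\Omega)$ delivered by Theorem \ref{T1} to a strong solution, that is, to prove that in fact $z\in H^{2}(\Omega)\cap H^{1}_{0}(\Omega)=\mathfrak{D}(L)$ and that $Lz=f$ almost everywhere in $\Omega$. Uniqueness is then automatic: any strong solution satisfies the integral identity \eqref{6} (simply integrate by parts, using the Green formula \eqref{6.1} and \eqref{9}), hence by Definition \ref{D1} and Theorem \ref{T1} it must coincide with $z$. The heart of the matter is an elliptic regularity estimate obtained by the translation (difference quotient) method; this is exactly why Lemma \ref{L1} was prepared.

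First I would substitute into the identity \eqref{15} the test function $v=-\triangle^{-h}_{k}\big(\zeta^{2}\triangle^{h}_{k}z\big)$, where $\zeta$ is a smooth cut-off with ${\rm dist}({\rm supp}\,\zeta,\partial\Omega)>2|h|$ for interior estimates, while for the directions $\vec{\mathrm{e}}_{k}$ tangential to $\partial\Omega$ the convexity of $\Omega$ allows $\zeta\equiv1$ up to the boundary. By Lemma \ref{L1} the difference operators can be transferred onto the other factor, so that the principal part of $B(v,z)$ takes the form
\[
\int_{\Omega}a^{ij}\,\triangle^{h}_{k}\!\big(\zeta D_{j}z\big)\,\overline{\triangle^{h}_{k}\!\big(\zeta D_{i}z\big)}\,dQ
\]
plus commutator terms involving $\triangle^{h}_{k}a^{ij}$ and $D\zeta$, which are bounded because $a^{ij}\in C^{1}(\bar\Omega)$. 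The ellipticity hypothesis \eqref{5} then furnishes the lower bound $a_{0}\big\|\zeta\,\triangle^{h}_{k}Dz\big\|^{2}_{L_{2}(\Omega)}$ for this term, modulo an additive $C\|z\|^{2}_{H^{1}_{0}(\Omega)}$.

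For the nonlocal contribution $(\mathfrak{D}^{\alpha}_{d-}p\,v,z)_{L_{2}(\Omega)}$ I would argue as in the proof of Theorem \ref{T1}, \cite{firstab_lit:1kukushkin2017}: by \eqref{9} it equals $(v,\mathfrak{D}^{\alpha}z)_{L_{2}(\Omega,p)}$, and the interpolation estimate \eqref{2} with $\delta$ small, together with Young's and Friedrichs' inequalities, bounds it by $C\|v\|_{L_{2}(\Omega)}\|z\|_{H^{1}_{0}(\Omega)}$; using the elementary difference-quotient bound $\|v\|_{L_{2}(\Omega)}\le C\|\zeta\,\triangle^{h}_{k}Dz\|_{L_{2}(\Omega)}+C\|z\|_{H^{1}_{0}(\Omega)}$ and $|(v,f)_{L_{2}(\Omega)}|\le\|f\|_{L_{2}(\Omega)}\|v\|_{L_{2}(\Omega)}$, both this term and the right-hand side are absorbed into $a_{0}\|\zeta\,\triangle^{h}_{k}Dz\|^{2}_{L_{2}(\Omega)}$ by Cauchy's inequality with a small parameter. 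Collecting everything yields
\[
\big\|\zeta\,\triangle^{h}_{k}Dz\big\|^{2}_{L_{2}(\Omega)}\le C\big(\|z\|^{2}_{H^{1}_{0}(\Omega)}+\|f\|^{2}_{L_{2}(\Omega)}\big),
\]
with $C$ independent of $h$ and $k$. By the standard characterization of $H^{1}$ through uniformly bounded difference quotients this gives $D_{i}z\in H^{1}$ — first in the interior and then, combining the tangential estimates with recovering the remaining pure normal second derivative from the equation itself, up to $\partial\Omega$ — so that $z\in H^{2}(\Omega)\cap H^{1}_{0}(\Omega)$ with $\|z\|_{H^{2}(\Omega)}\le C\|f\|_{L_{2}(\Omega)}$.

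Finally, with $z\in H^{2}(\Omega)\cap H^{1}_{0}(\Omega)$ in hand, I would integrate by parts back: by \eqref{6.1} we have $\int_{\Omega}a^{ij}D_{j}v\,\overline{D_{i}z}\,dQ=-\int_{\Omega}v\,\overline{D_{j}(a^{ij}D_{i}z)}\,dQ$ for $v\in H^{1}_{0}(\Omega)$, and by \eqref{9} the fractional term equals $(v,p\,\mathfrak{D}^{\alpha}z)_{L_{2}(\Omega)}$ since $p$ is real; hence $B(v,z)=(v,Lz)_{L_{2}(\Omega)}$. Comparing with \eqref{15} gives $(v,Lz-f)_{L_{2}(\Omega)}=0$ for all $v$ in the dense subspace $H^{1}_{0}(\Omega)$ of $L_{2}(\Omega)$, whence $Lz=f$ almost everywhere; thus $z$ is a strong solution, and by the remark at the beginning it is the unique one. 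The step I expect to be the main obstacle is the passage from interior to global $H^{2}$ regularity: the tangential difference quotients are controlled directly, but the normal second derivative must be extracted from the equation after flattening the boundary, and the assumptions $a^{ij}\in C^{1}(\bar\Omega)$ and the convexity of $\Omega$ are precisely what make this possible; one must also check at each stage that the nonlocal operator $\mathfrak{D}^{\alpha}$, being subordinate via \eqref{2}, never obstructs the absorption.
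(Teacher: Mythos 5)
Your proposal is correct and follows essentially the same route as the paper: the generalized solution $z$ from Theorem \ref{T1} is shown to lie in $H^{2}(\Omega)$ by the difference-quotient method (Lemma \ref{L1}, the duality relation \eqref{9} and the estimate \eqref{2} to control the fractional term), after which Green's formula \eqref{6.1} and the density of $C_{0}^{\infty}(\Omega)$ give $Lz=f$ a.e., with uniqueness inherited from Theorem \ref{T1}. The only substantive divergence is in the bookkeeping of the regularity step: you absorb the nonlocal term by a small-parameter Cauchy inequality and then plan a full up-to-the-boundary argument (tangential difference quotients using convexity, recovering the normal second derivative from the equation after flattening $\partial\Omega$), whereas the paper moves $p\,\mathfrak{D}^{\alpha}z$ to the right-hand side as $q=f-p\,\mathfrak{D}^{\alpha}z$, bounds $\|q\|_{L_{2}}$ via \eqref{2}, derives difference-quotient bounds only on interior subdomains with a cutoff $\chi$ and constant independent of $h$, and then passes directly to $D_{i}D_{j}z\in L_{2}(\Omega)$ by Lemma 7.24 of \cite{firstab_lit:gilbarg}, never flattening the boundary. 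Your more cautious treatment of the boundary is the standard one and, if anything, addresses a point the paper glosses over (the paper's $\chi$ is a characteristic function and its constants' independence of the distance to $\partial\Omega$ is not examined), at the price of extra work with the convexity hypothesis that the paper does not carry out.
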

\begin{proof}

In consequence of  theorem \ref{T1} exists unique element $z\in H^{1}_{0}(\Omega),$ so that equality  \eqref{15} is true.
Note that if the generalized  solution   of boundary value problem \eqref{3},\eqref{4} belongs to Sobolev space $H^{2}(\Omega),$ then
applying formulas \eqref{6.1},\eqref{9} we get
\begin{equation}\label{9.1}
 (  v,Lz )_{L_{2}(\Omega )} = B( v,z)=( v,f)_{L_{2}(\Omega )},\;\forall v\in C_{0}^{\infty}(\Omega),
\end{equation}
hence
\begin{equation}\label{9.2}
 (  v,Lz-f )_{L_{2}(\Omega )} = 0,\;\forall v\in C_{0}^{\infty}(\Omega).
\end{equation}
Since it is well known that there is no non-zero element in the Hilbert space which is orthogonal to the dense manifold, then $z$ is solution of the boundary value problem\eqref{3},\eqref{4}.

Let's prove that $z\in H^{2} (\Omega).$ Choose the function $v$ in \eqref{15} so that $\overline{({\rm supp} \,v)}     \subset \Omega,$   performing easy calculation, using equality \eqref{9}, we get
\begin{equation}\label{17}
 \int\limits_{\Omega} a^{ij}  D_{j} v  \overline{D_{i} z}\, dQ = \int\limits_{\Omega}   v \overline{q}\, dQ ,\; \forall v\in H^{1}_{0}(\Omega),\,   \overline{({\rm supp} \,v)}     \subset \Omega,
\end{equation}
where $q=(f-p\,  \mathfrak{D}^{ \alpha }z).$
For $2|h|<  {\rm dist}\,({\rm supp}\,v,\,\partial\Omega) $ change the function $v$ on it  difference attitude $\triangle^{-h} v=\triangle^{-h}_{k}v $ for some $1\leq k\leq n.$ Using \eqref{16},   \eqref{17} we have
\begin{equation*}
 \int\limits_{\Omega}  D_{j} v\,\overline{\triangle^{ h}\left( a^{ij}  D_{i} z\right)} dQ=-\int\limits_{\Omega}  (D_{j}\triangle^{-h} v)\overline{a^{ij}D_{i} z}\, dQ=-\int\limits_{\Omega} a^{ij} (D_{j}\triangle^{-h} v)\overline{D_{i} z}\, dQ= -\int\limits_{\Omega}(\triangle^{-h}v)\, \overline{q} \, dQ.
\end{equation*}
Using elementary calculation we get
$$
\triangle^{ h}\left( a^{ij}  D_{i} z\right)(Q)=a^{ij}(Q+h \,\vec{\mathrm{e}}_{k})  (D_{i} \triangle^{ h} z )(Q)+[\triangle^{ h} a^{ij}(Q) ]( D_{i} z)(Q),
$$
hence
\begin{equation*}
 \int\limits_{\Omega}D_{j} v\, \overline{a^{ij}(Q+h \,\vec{\mathrm{e}}_{k})  (D_{i}\triangle^{ h}  z)} \, dQ=      -\int\limits_{\Omega}    Dv \cdot g +(\triangle^{-h}v)\,\overline{q}  \, dQ,
\end{equation*}
where $g=(g_{1},g_{2},...,g_{n}),\;g_{j}=(\triangle^{ h} a^{ij})   D_{i} z.$
Note last relation, using the Cauchy Schwarz inequality, finiteness property of function $v,$ lemma 7.23 \cite[p.164]{firstab_lit:gilbarg} we have
\begin{equation}\label{17.1}
\left|\int\limits_{\Omega}a^{ij}(Q+h \,\vec{\mathrm{e}}_{k}) D_{j} v\, \overline{  (D_{i}\triangle^{ h}  z)}\, dQ \right| = \left|\int\limits_{\Omega} D_{j} v\, \overline{a^{ij}(Q+h \,\vec{\mathrm{e}}_{k})  (D_{i}\triangle^{ h}  z)}\, dQ \right| \leq
 $$
 $$
\leq\|Dv\|_{L_{2}(\Omega)} \|g\|_{L_{2}(\Omega)}+\|\triangle^{-h}v\|_{L_{2}(\Omega)}\|q\|_{L_{2}(\Omega)} \leq\|Dv\|_{L_{2}(\Omega)}\left(\|g\|_{L_{2}(\Omega)}+\|q\|_{L_{2}(\Omega)}\right)  .
\end{equation}
Applying  the Cauchy Schwarz inequality for finite sum and integrals, it is easy to see that
$$
\|g\|_{L_{2}(\Omega)}=\left(\int\limits_{\Omega}\sum\limits_{j=1}^{n}|(\triangle^{ h} a^{ij})   D_{i} z|^{2}dQ\right)^{1/2}\leq \left(\int\limits_{\Omega}|Dz|^{2}\sum\limits_{i,j=1}^{n}| \triangle^{ h} a^{ij} |^{2}dQ\right)^{1/2}\leq
$$
$$
\leq \sup\limits_{Q\in \Omega}
\left(\sum\limits_{i,j=1}^{n}\left| \triangle^{ h} a^{ij}(Q) \right|^{2}\right)^{1/2}\left(\int\limits_{\Omega}|Dz|^{2}dQ\right)^{1/2}\leq C_{1} \|z\|_{H^{1}(\Omega)}.
$$
Note that using \eqref{2}, we have
$$
\|q\|_{L_{2}(\Omega)}\leq \|f\|_{L_{2}(\Omega)}+   \|p\,\mathfrak{D}^{ \alpha }z\|_{L_{2}(\Omega)} \leq \|f\|_{L_{2}(\Omega)}+C_{2}
  \|z\|_{H^{1} (\Omega)}.
 $$
Using given above  from \eqref{17.1}, we get
\begin{equation}\label{17.2}
 \left|\int\limits_{\Omega}a^{ij}(Q+h \,\vec{\mathrm{e}}_{k}) D_{j} v\, \overline{  (D_{i}\triangle^{ h}  z)}\, dQ \right| \leq C\left(\|z\|_{H^{1} (\Omega)}+\|f\|_{L_{2}(\Omega)}   \right)\|D  v\|_{L_{2}(\Omega)}.
\end{equation}
Note that using condition  \eqref{5}, we can get a following estimate 
\begin{equation}\label{17.3}
\left| \int\limits_{\Omega}  a^{ij} \xi_{j}\overline{\xi_{i}}\, dQ\right| =
\left|\int\limits_{\Omega}  a^{ij}(  {\rm Re}\xi_{j}\,{\rm Re}\xi_{i}+  {\rm Im} \xi_{j} \,{\rm Im} \xi_{i} )\, dQ+  
\imath \int\limits_{\Omega}  a^{ij}(  {\rm Re}\xi_{i}\,{\rm Im}\xi_{j}-  {\rm Re} \xi_{j} \,{\rm Im} \xi_{i} )\, dQ\right| =
$$
$$
=\left\{\left(\int\limits_{\Omega}  a^{ij}(  {\rm Re}\xi_{j}\,{\rm Re}\xi_{i}+  {\rm Im} \xi_{j} \,{\rm Im} \xi_{i} )\, dQ\right)^{2}+
 \left( \int\limits_{\Omega}  a^{ij}(  {\rm Re}\xi_{i}\,{\rm Im}\xi_{j}-  {\rm Re} \xi_{j} \,{\rm Im} \xi_{i} )\, dQ\right)^{2}\right\}^{1/2}\geq
 $$
 $$
 \geq \int\limits_{\Omega}  a^{ij}(  {\rm Re}\xi_{j}\,{\rm Re}\xi_{i}+  {\rm Im} \xi_{j} \,{\rm Im} \xi_{i} )\, dQ\geq 
 k_{0} \int\limits_{\Omega}    
 \left | \xi \right|^{2}\, dQ.
\end{equation} 
Define the function $\chi,$ so that: $  {\rm dist}\,({\rm supp}\,\chi,\,\partial\Omega)>2|h|,$
\begin{equation*}
\chi(Q)=\left\{ \begin{aligned}
 1,\;\;\;\;\;\;\;Q\in {\rm supp  }\,\chi,\\
  0,\;Q\in  \bar{\Omega} \setminus {\rm supp  }\,\chi .\\
\end{aligned}
 \right.
 \end{equation*}
Suppose that $v=\chi\triangle^{h}z.$ Using relations \eqref{17.2}, \eqref{17.3}, we have two-sided estimate
\begin{equation}\label{18}
k_{0}\|\chi\triangle^{ h} D   z\|^{2}_{L_{2}(\Omega)}\leq\left| \int\limits_{\Omega}\chi  a^{ij}(Q+h \,\vec{\mathrm{e}}_{k}) \triangle^{ h}  D_{j} z \overline{\triangle^{ h}D_{i}  z}\, dQ\right|=
 \left| \int\limits_{\Omega} a^{ij}(Q+h \,\vec{\mathrm{e}}_{k})     D_{j} (\chi\triangle^{h}z)\, \overline{ (D_{i}\triangle^{ h}  z)}\, dQ\right|\leq
$$
$$
\leq
C\left(\|z\|_{H^{1} (\Omega)}+\|f\|_{L_{2}(\Omega)}   \right)\|\chi\triangle^{ h} D   z\|_{L_{2}(\Omega)}.
\end{equation}
Using the  Jung's inequality, for all positive $k$ we  get an estimate
\begin{equation*}
2\left(\|z\|_{H^{1} (\Omega)}+\|f\|_{L_{2}(\Omega)}   \right)\|\chi\triangle^{ h} D   z\|_{L_{2}(\Omega)}\leq
\frac{1}{k} \left(\|z\|_{H^{1} (\Omega)}+\|f\|_{L_{2}(\Omega)}   \right)^{2}+ k \|\chi\triangle^{ h} D   z\|^{2}_{L_{2}(\Omega)} .
\end{equation*}
Choosing $k<2k_{0}C^{-1},$ we can perform inequality \eqref{18} as follows
\begin{equation*}
 \|\chi\triangle^{ h} D   z\|^{2}_{L_{2}(\Omega)}\leq
C_{1}\left(\|z\|_{H^{1} (\Omega)}+\|f\|_{L_{2}(\Omega)}   \right)^{2} .
\end{equation*}
It implies that for all domain $\Omega' ,\,{\rm dist}(\Omega',\partial \Omega)>2|h|,$ we have
\begin{equation*}
\|\triangle^{ h}_{i}  D_{j}z\|_{L_{2}(\Omega ') } \leq C_{2}\left(\|z\|_{H^{1} (\Omega)}+\|f\|_{L_{2}(\Omega)}   \right),
\;  i,j=1,2,...,n.
\end{equation*}
In consequence of lemma 7.24 \cite[p.165]{firstab_lit:gilbarg}, we have that exist generalized derivative $D_{i}  D_{j}  z$ and satisfies to condition
\begin{equation*}
 \|D_{i}  D_{j}  z\|_{L_{2}(\Omega  ) } \leq C_{2}\left(\|z\|_{H^{1} (\Omega)}+\|f\|_{L_{2}(\Omega)}   \right),
\;  i,j=1,2,...,n.
\end{equation*}
Hence $z\in H^{2}(\Omega).$
\end{proof}

\section{Conclusions}
 
Using the   Lax-Milgram  theorem we have proved the existence of a generalized solution of the boundary value problem for  differential equation fractional order.  Was proved the identity of function is a generalized solution to a   Sobolev class functions   corresponding to the strong solution. Although this method is not new in the theory of partial differential equations, when evidence was used a new technique of fractional calculus theory. The result is proved for multidimensional operator which has a  reduction to various operators of fractional order, in mind what the idea of the proof is of interest.

\end{document}